\def \C {\mathbb{C}}
\def \R {\mathbb{R}}
\def \T {\mathbb{T}}
\def \Z {\mathbb{Z}}
\def \N {\mathbb{N}}
\DeclareMathOperator{\supp}{\text{supp}}
\def \ff {\mathcal{F}}
\def\d{\mathrm{d}}
\DeclareFontFamily{U}{mathx}{}
\DeclareFontShape{U}{mathx}{m}{n}{<-> mathx10}{}
\DeclareSymbolFont{mathx}{U}{mathx}{m}{n}
\DeclareMathAccent{\widecheck}{0}{mathx}{"71}
\theoremstyle{plain}
\newtheorem{lemma}{Lemma}[section]
\newtheorem{theorem}[lemma]{Theorem}
\newtheorem{corollary}[lemma]{Corollary}
\theoremstyle{definition}
\newtheorem{example}[lemma]{Example}
\theoremstyle{remark}
\newcommand{\scal}[1]{{\left\langle{#1}\right\rangle}}
\begin{document}
	
	\title[Wiener's Lemma]{On Wiener's Lemma on locally compact\\ abelian groups}
\author[Ph. Jaming]{Philippe Jaming}
	\address{Univ. Bordeaux, IMB, UMR 5251, F-33400 Talence, France. CNRS, IMB, UMR 5251, F-33400 Talence, France.}
	\email{Philippe.Jaming@math.u-bordeaux.fr}

\author[K. Kellay]{Karim Kellay}
\address{Univ. Bordeaux, IMB, UMR 5251, F-33400 Talence, France. CNRS, IMB, UMR 5251, F-33400 Talence, France.} 
\email{k.kellay@math.u-bordeaux.fr}

	\author[R. Perez III]{Rolando Perez III}
	
	\address{R. Perez III, Institute of Mathematics, University of the Philippines Diliman, 1101 Quezon City, Philippines}
	
	\email{rperez@math.upd.edu.ph}
	
	\keywords{Locally compact abelian groups, 
    F\o lner sequence, Wiener's lemma, 
    Bochner-Riesz means}
	
	\subjclass{43A25}

	\begin{abstract}	
    We establish a general form of Wiener’s lemma for measures on locally compact abelian (LCA)  groups by using Fourier analysis and the theory of F\o lner sequences. Our approach provides a unified framework that that encompasses both the discrete and continuous cases.

We also show a version of Wiener's lemma for Bochner-Riesz means on both  $\R^d$ and  $\T^d$.	
	\end{abstract}

	\thanks{The first and second authors are partially supported by the ANR project ANR-24-CE40-5470.}
		
	\maketitle 
	\section{Introduction}
Wiener’s lemma is a well-known identity in harmonic analysis that links the Fourier coefficients of a Borel measure on the circle to its discrete part. In particular, it states that $\mu(\{0\})$ is the limit of the averages
of the Fourier coefficients of $\mu$, given by
\begin{equation}
    \label{eq:wienerlemma}
\mu(\{0\})=\lim_{N\to\infty}\frac{1}{2N+1}\sum_{k=-N}^N\widehat{\mu}(n).
\end{equation}
Using translations, this lemma allows to reconstruct the discrete part of the measure $\mu$ from the partial sums of its Fourier series.

In this work, we investigate Wiener’s lemma in the framework of locally compact abelian (LCA) groups
and show that the discrete part of a measure can be obtained from various averages of its Fourier transform
extending \eqref{eq:wienerlemma}. This result is based on simple Fourier analysis and measure theoretic arguments.
To show the applicability of our results, we
start by showing that the discrete part of a finite measure can be reconstructed by averaging its Fourier transform along a F\o lner sequence. This recovers a previous result of Huang \cite{Hu} in the compact abelian case and of also recovers Wiener's original lemma.
As a second application of our technique, we also show a version of Wiener's lemma for weighted means. In particular, we show that the discrete part  of a finite measure on either $\R^d$ or $\T^d$ can be reconstructed from the limits of means
of its Fourier transform with respect to the Bochner-Riesz kernel.

The structure of the paper is as follows. In Section~\ref{Setting}, we introduce the necessary background on Fourier analysis on LCA groups and define the key notations. Section~\ref{Results} presents our main results, including a general lemma related to Wiener’s lemma and its consequences.
The application to Wiener's lemma with respect
to F\o lner sequences is given in Section ~\ref{Sec:folner}. We conclude with our results on
Bochner-Riesz means on $\R^d$ and on $\T^d$
in Sections~\ref{Sec:BRR} and \ref{Sec:BRT} respectively.

\section{Setting}
\label{Setting}
Throughout, $G$ will be an \text{LCA} group with the Haar measure $m_G$ and $\widehat{G}$ its dual group, {\it i.e.}
the set of characters of $G$.  The unit element of $G$ will be denoted by $\mathbf{1}_G$. Then $\widehat{G}$
is also an LCA group, and we denote by $m_{\widehat{G}}$ its Haar measure.
Depending on the circumstances, it is more convenient to consider elements of $\widehat{G}$
as characters (continuous group   homomorphisms $\gamma: G\to\{z\in\C\ :\ |z|=1\}$) or as a set that parametrizes those functions.
The Fourier transform of $f\in L^1(G)$ is the function on $\widehat{G}$ defined by
$$
\widehat{f}(\gamma)=\ff_G f(\gamma)=c_G\int_G f(x)\overline{\gamma(x)}\,\mbox{d}m_G(x),
\qquad\gamma\in\hat G
$$
where $c_G$ is a normalization constant depending on $G$. 
Recall that if further 
$\widehat{f}\in L^1(\widehat{G})$, then $f=\ff^{-1}_G\widehat{f}$
with the inverse Fourier transform $\ff^{-1}_G$ defined by
$$
\ff^{-1}_Gg(x)=c_{\widehat{G}}\int_{\widehat{G}}g(\gamma)\gamma(x)\,\mbox{d}m_{\widehat{G}}(\gamma),
$$ 
where $g=\hat f$. 
We will always assume that $c_G$ is chosen so that the 
Fourier transform extends to a unitary transform, i.e., $\|\widehat{f}\|_{L^2(\widehat{G})}=\|f\|_{L^2(G)}$ which implies also that $c_G=c_{\widehat{G}}$.
Note that $\ff^{-1}_G g=\overline{\ff_{\widehat G} \bar g}$ 
so that $\ff^{-1}_G$ also extends to a unitary transform on $L^2(\hat G)$
and that $f=\ff^{-1}_G\ff_Gf$ for $f\in L^2(G)$. 

Here we will further extend the Fourier transform to measures: if $\mu$ is a finite complex-valued measure on $G$, then we define
$$
\widehat{\mu}(\gamma)=\ff_G \mu(\gamma)=c_G\int_G \overline{\gamma(x)}\,\mbox{d}\mu(x),
\qquad\gamma\in\hat G.
$$
The key fact for us is the extension of Parseval's identity to measures: if $g\in L^1(\widehat{G})$,
then
$$
\int_{G}\ff_{\widehat{G}}g(x)\,\d\mu(x)
=\int_{\widehat{G}}g(\gamma)\widehat{\mu}(\gamma)\,\mbox{d}m_{\widehat{G}}(\gamma).
$$
For a more in-depth discussion, see, for example, \cite{GM,Rudin62}.

	\section{Results}\label{Results}
\subsection{A General Lemma}

\begin{lemma}
\label{thm:wienerG}
Let $\mu$ be a complex-valued Borel measure on a locally compact abelian group $G$. 
For $R>0$, let $(\varphi_R)_{R>0}$ be a family of continuous functions on $G$ such that
\begin{enumerate}
	\item for every $R>0$, $\|\varphi_R\|_\infty\leq \varphi_R(\mathbf{1}_G)=1$; and
	\item if $x\not=\mathbf{1}_G$, $\varphi_R(x)\to0$ as $R\to\infty$.
\end{enumerate}
Then
$$
\mu(\{\mathbf{1}_G\})=\lim_{R\rightarrow\infty}\int_G \varphi_R (x)\,\mathrm{d}\mu(x).
$$
Furthermore, if $\varphi_R$ is such that there exists  a 
$\psi_R\in L^1(\widehat{G})$ with $\varphi_R=\ff_{\widehat{G}}[\psi_R]$, then
\begin{equation*}
\label{eq:Wiener}
\mu(\{\mathbf{1}_G\})=\lim_{R\rightarrow\infty}\int_{\widehat{G}} \psi_R (\xi)\widehat{\mu}(\xi)\,\mathrm{d}m_{\widehat{G}}(\xi).
\end{equation*}
\end{lemma}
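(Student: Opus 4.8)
The plan is to obtain the first identity by a direct application of the dominated convergence theorem, and then to read off the Fourier-side formula by feeding $\psi_R$ into the Parseval identity for measures recorded in Section~\ref{Setting}.

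First I would isolate the atom at $\mathbf{1}_G$. Since $G$ is Hausdorff the singleton $\{\mathbf{1}_G\}$ is closed, hence Borel, and I may split
$$
\int_G \varphi_R(x)\,\d\mu(x)=\varphi_R(\mathbf{1}_G)\,\mu(\{\mathbf{1}_G\})+\int_{G\setminus\{\mathbf{1}_G\}}\varphi_R(x)\,\d\mu(x).
$$
By hypothesis (1) we have $\varphi_R(\mathbf{1}_G)=1$, so the first term equals $\mu(\{\mathbf{1}_G\})$ independently of $R$. It therefore suffices to prove that the second term vanishes in the limit $R\to\infty$.

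For the remaining integral I would pass to the total variation $|\mu|$, which is a finite positive measure because $\mu$ is a finite complex measure, and use the estimate $\left|\int_{G\setminus\{\mathbf{1}_G\}}\varphi_R\,\d\mu\right|\le\int_{G\setminus\{\mathbf{1}_G\}}|\varphi_R|\,\d|\mu|$. Each $\varphi_R$ is continuous, hence Borel measurable; hypothesis (1) gives the uniform domination $|\varphi_R|\le 1$ by the constant $1\in L^1(|\mu|)$; and hypothesis (2) gives $\varphi_R(x)\to 0$ pointwise on $G\setminus\{\mathbf{1}_G\}$. Dominated convergence then forces the right-hand side to $0$, which establishes the first identity.

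Finally, for the Fourier formula I would substitute $g=\psi_R\in L^1(\widehat{G})$ into the Parseval identity $\int_G\ff_{\widehat{G}}g\,\d\mu=\int_{\widehat{G}}g(\gamma)\widehat{\mu}(\gamma)\,\d m_{\widehat{G}}(\gamma)$. Since $\varphi_R=\ff_{\widehat{G}}[\psi_R]$ by assumption, this gives $\int_G\varphi_R\,\d\mu=\int_{\widehat{G}}\psi_R(\xi)\widehat{\mu}(\xi)\,\d m_{\widehat{G}}(\xi)$ for every $R$, and letting $R\to\infty$ together with the first part yields the claim. I do not anticipate a genuine obstacle: the only points demanding attention are the integrability of the dominating constant—this is precisely where finiteness of $\mu$ is used—and the Borel measurability of $\{\mathbf{1}_G\}$.
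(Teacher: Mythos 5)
Your proposal is correct, and it is actually a leaner argument than the one in the paper. The paper does not split off the atom inside the integral as you do; instead it forms the measure $\tilde\mu=\mu-\mu(\{\mathbf{1}_G\})\delta_{\mathbf{1}_G}$, introduces a compact neighborhood $V$ of $\mathbf{1}_G$ and a continuous cutoff $\chi_V$ supported in $V^2$, bounds the contribution near the identity by $|\tilde\mu|(V^2)\leq\varepsilon$ (a step that tacitly uses continuity from above, i.e.\ some regularity of the measure at $\mathbf{1}_G$), and only then applies dominated convergence to the piece on $G\setminus V$. Your route --- writing $\int_G\varphi_R\,\d\mu=\mu(\{\mathbf{1}_G\})+\int_{G\setminus\{\mathbf{1}_G\}}\varphi_R\,\d\mu$ and applying dominated convergence directly on $G\setminus\{\mathbf{1}_G\}$ with the constant $1$ as dominating function against the finite measure $|\mu|$ --- needs no cutoff and no regularity considerations at all, since pointwise convergence of $\varphi_R$ off the identity is exactly hypothesis (2) and dominated convergence holds for arbitrary finite Borel measures; the continuum limit $R\to\infty$ is handled, as you implicitly do, by passing to arbitrary sequences $R_n\to\infty$. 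What the paper's neighborhood decomposition buys is a template that still works under the weaker-looking but uniform hypothesis used later in the $\R^d$ section (convergence of $\|\varphi_R\chi_{\R^d\setminus B(0,\eta)}\|_\infty$ to $0$ rather than pointwise convergence), whereas your argument is tailored to the pointwise hypothesis as stated. The Parseval step in your proposal coincides with the paper's.
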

	
\begin{proof}
The second part of the lemma follows directly from the first one and Parseval's identity for measures.

Set $\tilde{\mu}=\mu-\mu(\{\mathbf{1}_G\})\delta_{\mathbf{1}_G}$.	
For a compact neighborhood $V$ of the identity element $\mathbf{1}_G$,
take $\chi_V$ to be a continuous function on $G$ such that $\chi_V=1$ on $V$, 
$0\leq\chi_V\leq 1$, and $\supp\chi_V\subset V^2:=\{v^2\,:v\in V\}$. Writing
$$
\varphi_R=\varphi_R\chi_{V}+\varphi_R(1-\chi_{V}),
$$
we obtain
\begin{align*}
\left| \int_G \varphi_R (x)d\tilde{\mu}(x)\right|
&\leq \int_{V^2} |\varphi_R(x)| d|\tilde\mu|(x)
+ \int_{G\setminus V} |\varphi_R(x)| d|\tilde\mu|(x)\\[2mm]
&\leq |\tilde\mu|(V^2)+\int_{G\setminus V} |\varphi_R(x)| d|\tilde\mu|(x).
\end{align*}
Now, given $\varepsilon>0$, choose $V$ to be a sufficiently small neighborhood of $\mathbf{1}_G$ 
to have $|\tilde\mu|(V^2)\leq\varepsilon$.
Next, since $\varphi_R(x)\to 0$ as $R\rightarrow\infty$ for $x\in G\setminus V$, $|\varphi_R|\leq 1$, and $|\tilde\mu|$
is a finite measure, 
$$
\int_{G\setminus V} |\varphi_R(x)| d|\tilde\mu|(x)\to 0
$$
as $R\rightarrow \infty$ by dominated convergence. We thus choose $R$ sufficiently large so that this quantity is $\leq\varepsilon$ to deduce that
$$\displaystyle\left| \int_G \varphi_R (x)d\tilde{\mu}(x)\right|\leq2\varepsilon.$$
It follows that
$$
\lim_{R\to\infty} \int_G \varphi_R (x)d\tilde{\mu}(x)=0.
$$
Expressing $\tilde\mu$ in terms of $\mu$, we obtain
		 \begin{equation*}
		 	\mu(\{\mathbf{1}_G\})=\lim_{R\rightarrow\infty}\int_G \varphi_R (x)d\mu(x)
		 \end{equation*}
as claimed.
\end{proof}

\subsection{F\o lner sequences}\label{Sec:folner}
Recall that a F\o lner sequence in $\widehat{G}$ is a sequence $(F_n)_{n\geq 1}$ of subsets of $\widehat{G}$ such that,
for every $\gamma\in \widehat{G}$,
$$
\lim_{n\to\infty}\frac{m_{\widehat{G}}\bigl(F_n\Delta(\gamma F_n)\bigr)}{m_{\widehat{G}}(F_n)}=0.
$$
As $\widehat{G}$ is a locally abelian group, it is amenable,  and F\o lner sequences are known to exist
\cite{Pat}.
For instance, when $G=\T^d$, we have $\widehat{G}=\Z^d$, and one may take $F_n=\{-n,\ldots,n\}^d$.
When $G=\R^d$, then $\widehat{G}=\R^d$ and one may take $F_n=[-n,n]^d$.

For each $n\in\mathbb N$ and $x\in G$, consider 
\begin{equation}
\label{eq:folner1}
\varphi_n(x)=\dfrac{\ff_{\widehat{G}}^{-1}[\mathbbm{1}_{F_n}](x)}{\ff_{\widehat{G}}^{-1}[\mathbbm{1}_{F_n}](\mathbf{1}_G)}
=\frac{1}{m_{\widehat{G}}(F_n)}\int_{F_n}\gamma(x)\,\mbox{d}m_{\widehat{G}}(\gamma).
\end{equation}
Obviously
$\|\varphi_R\|_\infty\leq \varphi_R(\mathbf{1}_G)=1$.

Using translation invariance of $m_{\widehat{G}}$, for any $\gamma_0\in\widehat{G}$
we also have
$$
\varphi_n(x)=\frac{1}{m_{\widehat{G}}(F_n)}\int_{\gamma_0 F_n}(\gamma_0^{-1}\gamma)(x)\,\mbox{d}m_{\widehat{G}}(\gamma),
$$
that is,
\begin{equation}
\label{eq:folner2}
\gamma_0(x)\varphi_n(x)=\frac{1}{m_{\widehat{G}}(F_n)}\int_{\gamma_0 F_n}\gamma(x)\,\mbox{d}m_{\widehat{G}}(\gamma).
\end{equation}
Subtracting \eqref{eq:folner2} from \eqref{eq:folner1}, we obtain
$$
\bigl(1-\gamma_0(x))\varphi_n(x)=\frac{1}{m_{\widehat{G}}(F_n)}\int_{\widehat{G}}
\bigl(\mathbbm{1}_{F_n}(\gamma)-\mathbbm{1}_{\gamma_0 F_n}(\gamma)\bigr)
\gamma(x)\,\mbox{d}m_{\widehat{G}}(\gamma).
$$
Now, fix $x\not=\mathbf{1}_G$ and take $\gamma_0\in \widehat{G}$ 
such that $\alpha(x)=|\gamma_0(x)-1|>0$. But then
\begin{align*}
|\varphi_n(x)|&=
\frac{|\bigl(1-\gamma_0(x))\varphi_n(x)|}{\alpha(x)}\\
&\leq\frac{1}{\alpha(x)}\frac{1}{m_{\widehat{G}}(F_n)}\int_{\widehat{G}}
\bigl|\mathbbm{1}_{F_n}(\gamma)-\mathbbm{1}_{\gamma_0 F_n}(\gamma)\bigr|
|\gamma(x)|\,\mbox{d}m_{\widehat{G}}(\gamma)\\
&=\frac{1}{\alpha(x)}\frac{m_{\widehat{G}}\bigl(F_n\Delta(\gamma_0 F_n)\bigr)}{m_{\widehat{G}}(F_n)}
\to 0
\end{align*}
as $n\rightarrow\infty$. Applying Lemma \ref{thm:wienerG}, we deduce that
$$
\mu(\{\mathbf{1}_G\})=\lim_{n\rightarrow\infty}\frac{1}{m_{\widehat{G}}(F_n)}\int_{F_n}\widehat{\mu}(\gamma)\,\mathrm{d}m_{\widehat{G}}(\gamma).
$$
Finally, replacing $\mu$ with its translate $\mu_x(E)=\mu(x E)$, we obtain the following:

\begin{theorem}[Wiener's lemma for F\o lner sequences]
Let $\mu$ be a complex-valued Borel measure on a locally compact abelian group $G$. 
Let $(F_n)_{n\geq 1}$ be a F\o lner sequence in the dual group $\widehat{G}$.
Then, for every $x\in G$,
$$
\mu(\{x\})=\lim_{n\rightarrow\infty}\frac{1}{m_{\widehat{G}}(F_n)}\int_{F_n}
\gamma(x)\widehat{\mu}(\gamma)\,\mathrm{d}m_{\widehat{G}}(\gamma).
$$
\end{theorem}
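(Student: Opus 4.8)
The plan is to deduce the theorem from Lemma~\ref{thm:wienerG} by constructing a suitable family $(\varphi_n)_{n\geq1}$ of continuous functions and then reducing the general point $x$ to the identity by translation. First I would set
$$
\varphi_n(y)=\frac{1}{m_{\widehat{G}}(F_n)}\int_{F_n}\gamma(y)\,\mathrm{d}m_{\widehat{G}}(\gamma),
$$
which is the normalized inverse Fourier transform of $\mathbbm{1}_{F_n}$. Hypothesis (1) of the Lemma, $\|\varphi_n\|_\infty\leq\varphi_n(\mathbf{1}_G)=1$, is immediate: since $|\gamma(y)|=1$ the integrand has modulus one, giving $|\varphi_n(y)|\leq1$, while $\gamma(\mathbf{1}_G)=1$ yields $\varphi_n(\mathbf{1}_G)=1$.

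The main work is checking hypothesis (2), the pointwise decay $\varphi_n(y)\to0$ for $y\neq\mathbf{1}_G$, and this is where the F\o lner condition enters. I would use that characters separate points of an LCA group to choose $\gamma_0\in\widehat{G}$ with $\gamma_0(y)\neq1$, and write $\alpha=|1-\gamma_0(y)|>0$. By translation invariance of $m_{\widehat{G}}$, the quantity $\gamma_0(y)\varphi_n(y)$ equals the average of $\gamma(y)$ over the translate $\gamma_0 F_n$; subtracting this from the definition of $\varphi_n(y)$ expresses $(1-\gamma_0(y))\varphi_n(y)$ as an integral of $\gamma(y)$ against $\mathbbm{1}_{F_n}-\mathbbm{1}_{\gamma_0 F_n}$. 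Estimating $|\gamma(y)|\leq1$ then gives
$$
|\varphi_n(y)|\leq\frac{1}{\alpha}\,\frac{m_{\widehat{G}}\bigl(F_n\Delta(\gamma_0 F_n)\bigr)}{m_{\widehat{G}}(F_n)},
$$
and the defining property of a F\o lner sequence forces the right-hand side to $0$ as $n\to\infty$. With both hypotheses verified, Lemma~\ref{thm:wienerG} yields the identity at $x=\mathbf{1}_G$.

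Finally, to reach an arbitrary $x\in G$ I would apply the case just proved to the translated measure $\mu_x(E)=\mu(xE)$. A change of variables together with $\gamma(x^{-1})=\overline{\gamma(x)}$ for a unitary character shows that $\widehat{\mu_x}(\gamma)=\gamma(x)\widehat{\mu}(\gamma)$, while clearly $\mu_x(\{\mathbf{1}_G\})=\mu(\{x\})$. Substituting these into the identity for $\mu_x$ produces exactly the claimed formula. I expect the decay estimate of the second paragraph to be the only genuine obstacle; the normalization and the translation step are routine manipulations of the Fourier transform.
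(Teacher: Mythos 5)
Your proposal is correct and follows essentially the same route as the paper: the same normalized kernel $\varphi_n=\ff_{\widehat{G}}^{-1}[\mathbbm{1}_{F_n}]/m_{\widehat{G}}(F_n)$, the same translation-invariance/symmetric-difference estimate $|\varphi_n(y)|\leq \alpha^{-1}\,m_{\widehat{G}}(F_n\Delta(\gamma_0F_n))/m_{\widehat{G}}(F_n)$ to verify hypothesis (2) of Lemma~\ref{thm:wienerG}, and the same final translation $\mu_x(E)=\mu(xE)$. If anything, you are slightly more explicit than the paper in invoking that characters separate points to produce $\gamma_0$ and in computing $\widehat{\mu_x}(\gamma)=\gamma(x)\widehat{\mu}(\gamma)$, steps the paper leaves implicit.
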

We now turn to the following examples.
\begin{example}
For $x\in G=\T^d$, we have
$$
\mu(\{x\})=\lim_{n\rightarrow\infty}\frac{1}{(2n+1)^d}\sum_{j\in\{-n,\ldots,n\}^d}
\widehat{\mu}(j)e^{2i\pi \scal{j,x}}
$$
as well as
$$
\mu(\{x\})=\lim_{n\rightarrow\infty}\frac{1}{N_d(n)}\sum_{j\in\Z^d,|j|\leq n}
\widehat{\mu}(j)e^{2i\pi \scal{j,x}},
$$
where $N_d(n)=|\Z^d\cap \overline{B(0,n)}|$ where $\overline{B(0,n)}$ is the closed ball of radius $n$ in $\R^d$
for the Euclidean norm $|\cdot|$.

On the other hand, for $x\in G=\R^d$, we have
$$
\mu(\{x\})=\lim_{R\rightarrow\infty}\frac{1}{(2R)^d}\int_{[-R,R]^d}
\widehat{\mu}(\xi)e^{2i\pi \scal{x,\xi}}\,\mbox{d}\xi
$$
and
$$
\mu(\{x\})=\lim_{R\rightarrow\infty}\frac{1}{\omega_dR^d}\int_{B(0,R)}
\widehat{\mu}(\xi)e^{2i\pi \scal{x,\xi}}\,\mbox{d}\xi,
$$
where $\omega_d$ is the volume of the unit ball.
\end{example}

\begin{example}
More generally, it was shown by Day \cite{Day}
that a sequence $(C_n)_{n\geq 1}$ of convex subsets of $\R^d$
is a F\o lner sequence if and only if
the inner radius $\rho(C_n)\to\infty$. Recall that $\rho(C_n)$ is the supremum
of the radii of balls contained in $C_n$.
In particular, if $(C_n)_{n\geq 1}$ is a sequence of convex subsets of $\R^d$ with $\rho(C_n)\to\infty$, then
for every finite measure $\mu$ and every $x\in\R^d$, we have
$$
\mu(\{x\})=\lim_{R\rightarrow\infty}\frac{1}{|C_n|}\int_{C_n}
\widehat{\mu}(\xi)e^{2i\pi \scal{x,\xi}}\,\mbox{d}\xi.
$$
\end{example}

\subsection{Weighted means on $\R^d$}\label{Sec:BRR}

We focus on  further applications of Lemma \ref{thm:wienerG}
when $G=\R^d$. In this case, we will use scaling to construct $\varphi_R$:

	\begin{lemma}\label{lem:scale}
	Let $\psi\in L^1(\mathbb R^d)$ be nonnegative. Then for $R>0$, the function $\varphi_R(\xi)=\dfrac{\widehat{\psi}(R\xi)}{\widehat\psi(0)}$ satisfies
	
		\begin{enumerate}
			\item $\|\varphi_R\|_\infty\leq1$; and
			\item for any $\eta>0$, $\displaystyle \|\varphi_R\chi_{\mathbb R^d\setminus B(0,\eta)}\|_\infty\rightarrow 0$ as $R\rightarrow\infty$.
		\end{enumerate}
	\end{lemma}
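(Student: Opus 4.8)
The plan is to read off both properties directly from the definition $\varphi_R(\xi)=\widehat{\psi}(R\xi)/\widehat{\psi}(0)$, invoking the nonnegativity of $\psi$ for the first property and the Riemann--Lebesgue lemma for the second. First I would note that, since $\psi\ge 0$ lies in $L^1(\R^d)$, we have $\widehat{\psi}(0)=c_G\int_{\R^d}\psi(x)\,\mathrm{d}x=c_G\|\psi\|_{L^1}>0$ (we tacitly assume $\psi\not\equiv 0$, so that $\varphi_R$ is well defined), and in particular $\widehat{\psi}(0)$ is real and strictly positive.

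For property (1) the key observation is that the modulus of the Fourier transform of a nonnegative integrable function attains its maximum at the origin. Indeed, for any $\eta\in\R^d$,
\begin{equation*}
|\widehat{\psi}(\eta)|=\left|c_G\int_{\R^d}\psi(x)\,e^{-2i\pi\scal{x,\eta}}\,\mathrm{d}x\right|\le c_G\int_{\R^d}\psi(x)\,\mathrm{d}x=\widehat{\psi}(0),
\end{equation*}
where the identity $|\psi|=\psi$ coming from nonnegativity lets us drop the modulus inside the integral. Taking $\eta=R\xi$ gives $|\varphi_R(\xi)|=|\widehat{\psi}(R\xi)|/\widehat{\psi}(0)\le 1$ for all $\xi$ and all $R>0$, which is exactly (1).

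For property (2), I would first rewrite the quantity to be estimated via the change of variables $y=R\xi$:
\begin{equation*}
\|\varphi_R\chi_{\R^d\setminus B(0,\eta)}\|_\infty=\frac{1}{\widehat{\psi}(0)}\sup_{|\xi|\ge\eta}|\widehat{\psi}(R\xi)|=\frac{1}{\widehat{\psi}(0)}\sup_{|y|\ge R\eta}|\widehat{\psi}(y)|.
\end{equation*}
Since $\psi\in L^1(\R^d)$, the Riemann--Lebesgue lemma gives $\widehat{\psi}\in C_0(\R^d)$, so $\widehat{\psi}$ vanishes at infinity: for each $\varepsilon>0$ there is $T_0>0$ with $|\widehat{\psi}(y)|\le\varepsilon$ whenever $|y|\ge T_0$. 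Hence $\sup_{|y|\ge R\eta}|\widehat{\psi}(y)|\le\varepsilon$ as soon as $R\ge T_0/\eta$, and letting $R\to\infty$ yields (2).

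The argument is essentially routine, and there is no genuine obstacle. The only point worth flagging is that (2) requires \emph{uniform} smallness of $|\widehat{\psi}|$ over the entire region $\{|\xi|\ge\eta\}$ rather than mere pointwise decay; this is supplied precisely by the $C_0$ conclusion of Riemann--Lebesgue, since vanishing at infinity is by definition a statement about uniform smallness outside a sufficiently large ball.
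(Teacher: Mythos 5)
Your proof is correct and follows essentially the same route as the paper: the bound $|\widehat{\psi}(R\xi)|\le\widehat{\psi}(0)$ (which the paper attributes to positive-definiteness of $\widehat{\psi}$ and you obtain by the equivalent direct triangle-inequality computation using $\psi\ge 0$), plus the Riemann--Lebesgue fact $\widehat{\psi}\in C_0(\R^d)$ for the uniform decay in (2). Your write-up merely fills in the details (including the tacit assumption $\psi\not\equiv 0$) that the paper leaves implicit.
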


	\begin{proof}
		Since $\widehat\psi$ is positive-definite, $|\widehat \psi (R\xi)|\leq \widehat\psi(0)$ for all $\xi\in \mathbb R^d$ and so $\|\varphi_R\|_\infty\leq 1$. The second property follows from the fact that $\widehat \psi\in C_0 (\mathbb R^d)$. 
	\end{proof}
	
	As an immediate consequence of Lemma \ref{thm:wienerG} when $G=\mathbb R^d$:
	\begin{corollary}
		\label{cor:wiener-rd}
		Let $\mu$ be a complex-valued Borel measure on $\mathbb R^d$ and let $x\in \mathbb R^d$. If $\psi\in L^1(\mathbb R^d)$ is nonnegative, then 
		\begin{align*}
		\mu(\{x\})&=\lim_{R\rightarrow\infty}\dfrac{1}{R^d\,\widehat\psi(0)}\int_{\mathbb R^d} \psi\left(\dfrac{\xi}{R}\right)\widehat\mu(\xi)e^{2\pi i\langle x,\xi\rangle}\,\mathrm{d}\xi\\[2mm]
		&=\lim_{R\rightarrow\infty}\dfrac{1}{\widehat\psi(0)}\int_{\mathbb R^d}\widehat\psi(R(t-x))\,\mathrm{d}\mu(t).
		\end{align*}	
	\end{corollary}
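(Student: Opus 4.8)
The plan is to feed the family $\varphi_R(t)=\widehat\psi(Rt)/\widehat\psi(0)$ supplied by Lemma~\ref{lem:scale} into Lemma~\ref{thm:wienerG} with $G=\R^d$ (so that $\widehat{G}=\R^d$ and $\mathbf{1}_G=0$), and then to move the resulting identity from the origin to an arbitrary $x$ by translating $\mu$. First I would confirm that $(\varphi_R)_{R>0}$ satisfies the hypotheses of Lemma~\ref{thm:wienerG}. Continuity of each $\varphi_R$ is clear since $\widehat\psi\in C_0(\R^d)$, and $\varphi_R(0)=1$; property~(1) of Lemma~\ref{lem:scale} gives $\|\varphi_R\|_\infty\le 1=\varphi_R(0)$, which is hypothesis~(1). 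For hypothesis~(2), fix $t\neq 0$ and take $\eta=|t|/2$: since $t\in\R^d\setminus B(0,\eta)$, property~(2) of Lemma~\ref{lem:scale} yields $|\varphi_R(t)|\le\|\varphi_R\chi_{\R^d\setminus B(0,\eta)}\|_\infty\to 0$. Lemma~\ref{thm:wienerG} then produces $\mu(\{0\})=\lim_{R\to\infty}\widehat\psi(0)^{-1}\int_{\R^d}\widehat\psi(Rt)\,\d\mu(t)$.

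Next I would pass to the Fourier side by exhibiting a $\psi_R\in L^1(\widehat{G})$ with $\varphi_R=\ff_{\widehat{G}}[\psi_R]$. The natural candidate is $\psi_R(\xi)=R^{-d}\widehat\psi(0)^{-1}\psi(\xi/R)$, which lies in $L^1(\widehat{G})$ because $\psi\in L^1$. A short change of variables $\xi=R\eta$ in the definition of $\ff_{\widehat{G}}[\psi_R]$, together with the dilation identity for the Fourier transform and the normalization $c_G=c_{\widehat{G}}$, shows $\ff_{\widehat{G}}[\psi_R]=\varphi_R$. The second part of Lemma~\ref{thm:wienerG}, which is just Parseval's identity for measures, then gives the first displayed formula at $x=0$, namely $\mu(\{0\})=\lim_{R\to\infty} R^{-d}\widehat\psi(0)^{-1}\int_{\R^d}\psi(\xi/R)\,\widehat\mu(\xi)\,\d\xi$.

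Finally, to reach an arbitrary $x$, I would apply both identities to the translated measure $\mu_x$ given by $\mu_x(E)=\mu(x+E)$, for which $\mu_x(\{0\})=\mu(\{x\})$. On the spatial side one has $\int_{\R^d}\widehat\psi(Rt)\,\d\mu_x(t)=\int_{\R^d}\widehat\psi(R(t-x))\,\d\mu(t)$, which is the second displayed formula; on the Fourier side a character computation gives $\widehat{\mu_x}(\xi)=e^{2\pi i\scal{x,\xi}}\widehat\mu(\xi)$, which inserts the exponential factor and produces the first displayed formula. The only genuinely delicate step is the verification that $\varphi_R=\ff_{\widehat{G}}[\psi_R]$ with \emph{exactly} the constant appearing in the statement: one must balance the Jacobian $R^{-d}$ of the dilation against the argument scaling in $\widehat\psi(R\,\cdot)$ and confirm that $c_G$ and $c_{\widehat{G}}$ cancel, so that no extraneous factor survives. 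Everything else reduces to the two lemmas and the translation invariance of the construction.
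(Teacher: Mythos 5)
Your proposal is correct and follows essentially the same route as the paper: feed the scaled family $\varphi_R=\widehat\psi(R\,\cdot)/\widehat\psi(0)$ from Lemma~\ref{lem:scale} into Lemma~\ref{thm:wienerG} to get the identity at the origin, pass to the Fourier side via Parseval (the second part of Lemma~\ref{thm:wienerG}, with $\psi_R(\xi)=R^{-d}\widehat\psi(0)^{-1}\psi(\xi/R)$), and translate $\mu$ to reach arbitrary $x$. The only difference is that you spell out the verification of the hypotheses and the dilation bookkeeping that the paper leaves implicit, which is harmless.
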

    
	\begin{proof}
		If $\psi\in L^1(\mathbb R^d)$ is nonnegative, combining Lemmas \ref{thm:wienerG} and \ref{lem:scale}
        gives
		\begin{equation*}
			\label{eq:cor-wiener-rd-proof1}
			\mu(\{0\})=\lim_{R\rightarrow\infty}\dfrac{1}{\widehat\psi(0)}\int_{\mathbb R^d}\widehat\psi(R\xi)\,\mathrm{d}\mu(\xi).
		\end{equation*}
Applying this for the translated measure $\mu_x$ so that $\widehat{\mu_x}(\xi)=e^{2\pi i \langle x,\xi\rangle}\widehat\mu(\xi)$, we obtain the second formula. The first one
follows from Parseval's identity.
%
	\end{proof}
	
	Let us illustrate the result using some examples.
	
	\begin{example}
		For $\xi\in \mathbb R^d$, consider the Gaussian $\psi(\xi)=e^{-\pi |\xi|^2}=\widehat\psi(\xi)$. By Corollary \ref{cor:wiener-rd}, we obtain 
		\begin{align*}
		\mu(\{x\})&=\lim_{R\rightarrow\infty}\dfrac{1}{R^d}\int_{\mathbb R^d} e^{-\pi \frac{|\xi|^2}{R^2}}e^{2\pi i\langle x,\xi\rangle}\,\widehat\mu(\xi)\,\mathrm{d}\xi\\
		&=\lim_{R\rightarrow\infty}\int_{\mathbb R^d}e^{-\pi R^2|t-x|^2}\,\mathrm{d}\mu(t)
		\end{align*}
		for all $x\in\mathbb R^d$.
	
	\end{example}
	
	
	\begin{example}\label{eq:bochnerriezR}
		For $\alpha>0$ and $\xi\in \mathbb R^d$, define
		$$
		m_{\alpha}(\xi):=(1-|\xi|^2)_+^\alpha
		$$
		where $(1-|\xi|^2)_+^\alpha$ gives the positive part of $(1-|\xi|^2)^\alpha$.
		From \cite[B.5]{G2014}, it is known that
		$$
		\widehat{m_\alpha}(\xi)=2^{\alpha}(2\pi)^{\frac{d}{2}}\Gamma(\alpha+1)\dfrac{J_{\frac{d}{2}+\alpha}(2\pi |\xi|)}{(2\pi |\xi|)^{\frac{d}{2}+\alpha}},\qquad \xi\in \mathbb R^d
		$$
		where $J_{\frac{d}{2}+\alpha}$ is the Bessel function of order $\frac{d}{2}+\alpha$. Letting $|\xi|\rightarrow 0$, we get
		$$m_\alpha(0)=\dfrac{2^{\alpha}(2\pi)^{\frac{d}{2}}\Gamma(\alpha+1)}{2^{\frac{d}{2}+\alpha}\Gamma(\frac{d}{2}+\alpha+1)}=\dfrac{\pi^{\frac{d}{2}}\Gamma(\alpha+1)}{\Gamma(\frac{d}{2}+\alpha+1)}.$$ 
		By Corollary \ref{cor:wiener-rd}, we obtain
		\begin{align*}
		\mu(\{x\})&=\lim_{R\rightarrow\infty}\dfrac{1}{R^d\,\widehat{m_\alpha}(0)}\int_{\mathbb R^d} {m_\alpha}\left(\dfrac{\xi}{R}\right)\widehat\mu(\xi)e^{2\pi i\langle x,\xi\rangle}\,\mathrm{d}\xi\\
		&=\lim_{R\rightarrow \infty}\dfrac{\Gamma(\frac{d}{2}+\alpha+1)}{R^d\pi^{\frac{d}{2}}\Gamma(\alpha+1)}\int_{\mathbb R^d}\left(1-\dfrac{|\xi|^2}{R^2}\right)_+^\alpha e^{2\pi i x\xi}\widehat\mu(\xi)\,\mathrm{d}\xi
		\end{align*}
		for all $x\in\mathbb R^d$. Note that if the dimension $d$ is even, the gamma factors outside the integral can be simplified further.
	\end{example}

\subsection{Weighted means on $\T^d$}\label{Sec:BRT}

In this section, we are going to prove an analogue of Example \ref{eq:bochnerriezR} on $\T^d$.
The difficulty is that pointwise estimates of Bochner-Riesz on $\T^d$ are not available as
results on $\T^d$ are usually obtained by transference from results on $\R^d$.
Here we define, for $\delta> 0,\,N\in\mathbb N$, and $x\in\mathbb \T^d$,
$$
B_N^{d,\delta}(x)=\sum_{k\in\Z^d}\left(1-\frac{|k|^2}{N^2}\right)_+^\delta e^{2i\pi\scal{k,x}}
$$
where, as usual, $(1-t)^\delta_+$ is $(1-t)^\delta$ for $0\leq t\leq 1$ and $0$ otherwise.
The spherical Dirichlet kernel is then $$D_N^d(x)=B_N^{d,0}(x)=\displaystyle \sum_{k\in\Z^d, |k|\leq N}e^{2i\pi\scal{k,x}}.$$
Moreover, set
$$
\beta_N^{d,\delta}=B_N^{d,\delta}(0)=\sum_{k\in\Z^d, |k|\leq N}\left(1-\frac{|k|^2}{N^2}\right)_+^\delta.
$$
Note that
$$
\beta_N^{d,\delta}\geq \sum_{k\in\Z^d, |k|\leq N/2}\left(1-\frac{|k|^2}{N^2}\right)_+^\delta
\gtrsim N^d.
$$

Our aim is to prove the following:

\begin{theorem}
    Let $\mu$ be a finite measure on $\T^d$ and $\delta\geq 0$.
    Then, for every $x\in\T^d$,
    $$
\mu(\{x\})=\lim_{N\rightarrow\infty}\frac{1}{\beta_N^{d,\delta}}\sum_{j\in\Z^d,|j|\leq N}
\left(1-\frac{|j|^2}{N^2}\right)_+^\delta\widehat{\mu}(j)e^{2i\pi \scal{j,x}}.
$$
\end{theorem}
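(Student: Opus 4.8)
The plan is to apply Lemma~\ref{thm:wienerG} with $G=\T^d$ and $\widehat G=\Z^d$ to the normalized kernels
$$
\varphi_N(x)=\frac{B_N^{d,\delta}(x)}{\beta_N^{d,\delta}}=\frac1{\beta_N^{d,\delta}}\sum_{k\in\Z^d}\Bigl(1-\frac{|k|^2}{N^2}\Bigr)_+^\delta e^{2i\pi\scal{k,x}},
$$
with $N$ playing the role of the parameter $R$. These are precisely $\ff_{\widehat G}[\psi_N]$ for the nonnegative weight $\psi_N(k)=\beta_N^{-1}(1-|k|^2/N^2)_+^\delta\in\ell^1(\Z^d)$ (the even symmetry of $\psi_N$ making the sign in the character immaterial). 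Once the two hypotheses of Lemma~\ref{thm:wienerG} are verified for $(\varphi_N)_N$, its second conclusion gives $\mu(\{0\})=\lim_N\sum_k\psi_N(k)\widehat\mu(k)$, which is the claimed identity at $x=0$; replacing $\mu$ by its translate $\mu_x$ exactly as in the F\o lner argument then inserts the factor $e^{2i\pi\scal{j,x}}$ and yields the statement for all $x\in\T^d$.

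First I would check the normalization hypothesis, which is immediate: since the coefficients $(1-|k|^2/N^2)_+^\delta$ are nonnegative, the triangle inequality gives $|B_N^{d,\delta}(x)|\le\sum_k(1-|k|^2/N^2)_+^\delta=B_N^{d,\delta}(0)=\beta_N^{d,\delta}$, so that $\|\varphi_N\|_\infty\le\varphi_N(\mathbf{1}_G)=1$.

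The substance of the proof is the second hypothesis, namely $\varphi_N(x)\to0$ for each fixed $x\neq0$. As the text notes, pointwise bounds for the Bochner--Riesz kernel on $\T^d$ are not directly available, so I would bypass them through summation by parts. If $x\neq0$, some coordinate $x_j\notin\Z$, so $|1-e^{2i\pi x_j}|>0$; setting $a_k=(1-|k|^2/N^2)_+^\delta$ and shifting the index by the basis vector $e_j$ yields
$$
(1-e^{2i\pi x_j})B_N^{d,\delta}(x)=\sum_{k\in\Z^d}(a_k-a_{k-e_j})e^{2i\pi\scal{k,x}},
$$
hence $|B_N^{d,\delta}(x)|\le|1-e^{2i\pi x_j}|^{-1}\sum_k|a_k-a_{k-e_j}|$. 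For each fixed transverse index $k'\in\Z^{d-1}$ the sequence $k_j\mapsto a_k$ is nonnegative, finitely supported and unimodal, peaking at $k_j=0$ with value $(1-|k'|^2/N^2)_+^\delta$; its total variation along that line therefore equals twice this maximum. Summing over $k'$ gives $\sum_k|a_k-a_{k-e_j}|=2\beta_N^{d-1,\delta}$, and since $\beta_N^{d-1,\delta}$ is a sum of $\lesssim N^{d-1}$ terms each bounded by $1$, we have $\beta_N^{d-1,\delta}\lesssim N^{d-1}$.

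Combining this with the lower bound $\beta_N^{d,\delta}\gtrsim N^d$ recorded before the theorem, I obtain for each fixed $x\neq0$
$$
|\varphi_N(x)|=\frac{|B_N^{d,\delta}(x)|}{\beta_N^{d,\delta}}\lesssim\frac1{|1-e^{2i\pi x_j}|}\,\frac{N^{d-1}}{N^d}\xrightarrow[N\to\infty]{}0,
$$
which is exactly hypothesis~(2). Lemma~\ref{thm:wienerG} then applies and, after the translation step, gives the theorem. The single real obstacle is this decay estimate: the summation-by-parts identity combined with the one-dimensional unimodality of the weights is what replaces the unavailable pointwise kernel bounds, while the normalization, the passage to the Fourier side through Parseval, and the translation in $x$ are routine and mirror the F\o lner case.
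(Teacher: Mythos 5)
Your proof is correct, and while it uses the same overall framework as the paper (apply Lemma~\ref{thm:wienerG} to $\varphi_N=B_N^{d,\delta}/\beta_N^{d,\delta}$, reduce to $x=0$ by translating the measure), your treatment of the key step --- the decay $\varphi_N(x)\to 0$ for $x\neq 0$ --- is genuinely different. The paper proceeds in two stages: first it proves $D_N^d(x)=O(N^{d-1})$ for the spherical Dirichlet kernel by induction on the dimension (slicing, with the one-dimensional closed formula as base case), then it performs Abel summation in the radial parameter $j=|k|^2$ to write $B_N^{d,\delta}$ as a weighted combination of the kernels $D_j^d$, controlling the weights by the Mean Value Theorem. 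You instead take a discrete derivative in a single coordinate direction $e_j$ with $x_j\notin\Z$, i.e.\ multiply by $1-e^{2i\pi x_j}$, and bound $\sum_k|a_k-a_{k-e_j}|$ by the total-variation/unimodality argument along lattice lines, which evaluates exactly to $2\beta_N^{d-1,\delta}\lesssim N^{d-1}$; both routes land on the same bound $|B_N^{d,\delta}(x)|\lesssim_x N^{d-1}$, which combined with $\beta_N^{d,\delta}\gtrsim N^d$ gives $|\varphi_N(x)|\lesssim N^{-1}$. Your argument is shorter and more uniform: it needs no induction, no explicit kernel formula, and handles $d=1$ and $\delta=0$ without any case distinction (the paper's MVT step is really tailored to $\delta>0$, with $\delta=0$ covered separately by the Dirichlet bound); it is also pleasingly in the spirit of the paper's own F\o lner computation, where one multiplies by $1-\gamma_0(x)$ and exploits translation of the weight. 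What the paper's route buys in exchange is the estimate on the spherical Dirichlet kernel as an explicit intermediate result and the classical Abel-summation link between Bochner--Riesz and Dirichlet kernels. One small point to keep as is: your remark that the even symmetry of $\psi_N$ makes the conjugation in $\ff_{\widehat G}$ harmless is exactly the justification needed to invoke the second half of Lemma~\ref{thm:wienerG}, so do not drop it.
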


\begin{proof}
Translating the measure, it is enough to prove the result for $x=0$.
It will follow from Lemma \ref{thm:wienerG} by considering $\varphi_N(x)=\dfrac{B_N^{d,\delta}(x)}{\beta_N^{d,\delta}}$. Obviously, $|\varphi_N(x)|\leq\varphi_N(0)=1$, so the only thing
we need to show is that when $x\notin\Z^d$, $\varphi_N(x)\to 0$ as $N\rightarrow \infty$.

We have not been able to find a convenient reference containing an estimate of $B_N^{d,\delta}$
where this is done, except for $d=1$ where an estimate can be found in several textbooks
including \cite{Zyg}. For sake of completeness, we will give a simple
estimate, and include a proof for $d=1$ as well.

We will start with estimating the spherical Dirichlet kernel and prove, by induction on
the dimension, that for $x\notin\Z^d$, $D_N^d(x)=O(N^{d-1})$.

Indeed, when $d=1$ and $x\notin\Z$,
$D_N^1(x)=\dfrac{\sin (2N+1)\pi x}{\sin\pi x}$ thus $|D_N^1(x)|\leq \dfrac{1}{|\sin\pi x|}$
is bounded independently of $N$. Let us now assume that the estimate is proven
in dimension $d$ and take $x=(x_1,\ldots,x_{d+1})\in\R^{d+1}\setminus\Z^{d+1}$. Then, up to a permutation
of the coordinates, we may assume that $\bar x:=(x_2,\ldots,x_{d+1})\notin\Z^d$.
Then write
\begin{align*}
D_N^{d+1}(x)&=\sum_{j=-N}^{N}e^{2i\pi jx_1}\sum_{k\in\Z^d,|k|\leq \sqrt{N^2-j^2}}e^{2i\pi\scal{k,\bar x}}\\
&=\sum_{j=-N}^{N}e^{2i\pi jx_1}D^d_{[\sqrt{N^2-j^2}]}(\bar x).
\end{align*}
From the induction hypothesis $|D^d_{[\sqrt{N^2-j^2}]}(\bar x)|\lesssim [\sqrt{N^2-j^2}]^d\lesssim N^{d-1}$
from which we immediately obtain that $|D_N^{d+1}(x)|\lesssim N^d$.

\smallskip

We now estimate the Bochner-Riesz kernel.
For $j\in\N$, write $S_j=\{k\in\Z^d\,:|k|^2=j\}$ for the intersection of $\Z^d$ with the 
sphere of radius $j$ (which may be empty). Then
$$
B_N^{d,\delta}(x)=\sum_{j=0}^{N^2}\left(1-\frac{j}{N^2}\right)_+^\delta\sum_{k\in S_j}e^{2i\pi\scal{k,x}}.
$$
Using Abel summation, we may then rewrite this as
$$
B_N^{d,\delta}(x)=1-\left(1-\frac{1}{N^2}\right)^\delta
+\sum_{j=1}^{N^2-1}\left[\left(1-\frac{j}{N^2}\right)_+^\delta-\left(1-\frac{j+1}{N^2}\right)_+^\delta\right]
D_j^d(x).
$$
Using our estimate for the Dirichlet kernel and the Mean Value Theorem,
we obtain that when
$x\notin\Z^d$,
\begin{align*}
|B_N^{d,\delta}(x)|&\lesssim
N^{d-1}\sum_{j=0}^{N^2}\frac{\delta}{N^2}\left(1-\frac{j}{N^2}\right)_+^{\delta-1}
\\
&\lesssim N^{d-1}\int_0^1(1-x)^{\delta-1}\,\mbox{d}x\lesssim N^{d-1}.
\end{align*}
As $\beta_N^{d,\delta}\gtrsim N^d$, we obtain $|\varphi_N(x)|\lesssim N^{-1}$ when $x\notin\Z^d$
as claimed.
\end{proof}

\nocite{*}
\bibliographystyle{plain}

\begin{thebibliography}{}

\end{thebibliography}


\begin{thebibliography}{10}

\bibitem{Day}
M. Day, 
{\it Ergodic theorems for abelian semigroups.}
Trans. Amer. Math. Soc. {\bf 51} (1942), 399--412.

\bibitem{G2014}
L. Grafakos,
{\it Classical Fourier Analysis.}
Graduate Texts in Mathematics {\bf 249}. Springer-Verlag,
New York-Berlin (2014).

\bibitem{GM}
C. C. Graham \& C. McGehee, 
{\it Essays in Commutative Harmonic Analysis.} 
Grundlehren der Mathematischen Wissenschaften [Fundamental Principles of Mathematical Science], {\bf 238}. Springer-Verlag,
New York-Berlin (1979).

\bibitem{Hu} 
H. Huang,
{\it Mean ergodic theorem for amenable discrete quantum
groups and a Wiener-type theorem for compact metrizable groups.}
Anal. PDE {\bf 9} (2016), 893--906.

\bibitem{Pat} 
A.\,L.\,T. Paterson, 
{\it Amenability.}
Math. Surveys Monogr., {\bf 29}
American Mathematical Society, Providence, RI, (1988).

\bibitem{Rudin62} 
W. Rudin, 
{\it Fourier Analysis on Groups.}
Interscience Tracts in Pure and Applied Mathematics, {\bf 12}. Interscience Publishers (a division of John Wiley $\&$  Sons, Inc.), New York-London, (1962). 


\bibitem{Zyg}
A. Zygmund,
{\it Trigonometric Series. 3rd Ed.}
Cambridge University Press (2003).
\end{thebibliography}

\end{document}